\title{A note on degenerations of Morse actions\footnote{\textbf{AMS Codes} : 22E40 53C35 20E42 \textbf{Keywords} : Representations of discrete groups in semi-simple Lie groups, degenerations Morse actions on symmetric spaces and buildings.}}
\author{Louis Merlin}
\begin{document}

\maketitle

\begin{abstract}
We study Morse representations of discrete subgroups in higher rank semi-simple Lie groups defined in \cite{klp2}. We show that, if a sequence of Morse representations $\rho_n:\Gamma\rightarrow G$ is (strongly) unbounded in the character variety, the group must have a very particular structure : a free product of a free group and surface groups.
\end{abstract}

\section{Introduction}

In \cite{klp1}, M. Kapovich, B. Leeb and J. Porti introduced a class of discrete subgroups of higher rank Lie groups, called the Morse subgroups, which enjoy remarkable "rank one properties". Those groups are meant to replace the rank one convex cocompact subgroups for which the naive definition (the convex core is compact) fails to produce interesting examples in higher rank (\cite{quintcc} and \cite{kleinerleebcc}). The authors used opportune characterizations of convex cocompactness (using dynamical properties on the boundaries or coarse geometric properties) to generalize it in higher rank. The terminology refers to the fact that those groups satisfy a higher rank version of the Morse lemma, see \cite{klp2}. We postpone in section \ref{prel} a formal definition and a brief discussion on the propreties of Morse subgroups and actions. Those groups have been paid with a lot of attention, in particular in the direction of Anosov representations; see \cite{ggkw}, \cite{anosovhyperbolic} and \cite{klpsurvey}.

As being thought of groups that exhibit rank one behaviour properties, it is natural to ask what can be said about degenerations of Morse representations. Indeed, there is a huge variety of different phenomena in rank one : lattices are rigid in dimensions greater than 2 (Mostow’s rigidity theorem) while flexible in dimension 2 (Teichmuller space). The representations of fundamantal group of a hyperbolic acylindrical 3-manifold inside PSL$_2(\mathbb{C})$ is compact (\cite{cullershalen}), while Shottky groups or quasi-fuchsian representations have a non compact space of representations (actually the statement below will precise this observation).

\paragraph*{Main Result.}

In the sequel, $G$ will be a semi-simple Lie group with no compact factor. We denote by $(X,g)$ the Riemannian symmetric space associated to $G$, that is the space $G/K$ where $K$ is a maximal compact subgroup, endowed with a $G$-left invariant metric. The reader who is not familiar with the subject may keep in mind the case $X=\mathbb{H}^2\times\mathbb{H}^2$, $G=$PSL$_2(\mathbb{R})\times$PSL$_2(\mathbb{R})$. For an element $\gamma\in G$, we denote by $\norm{\gamma}$ its translation length, that is
\[\norm{\gamma}=\inf_{x\in X} d(\gamma x,x),\]
where $d$ is the distance derived from $g$.
We only consider actions of a finitely generated group $\Gamma$ by Riemannian isometries on $X$.

\begin{defi}[tame degeneration]\label{defi:tame}
Let $\rho_n : \Gamma \rightarrow G$ be a sequence of representations of a finitely generated group $\Gamma = <s_1,\cdots,s_k>$ into a Lie group $G$. We set 
\[\lambda_n = \inf_i \{\norm{\rho_n(s_i)}\}\quad\quad \mathrm{and}\quad\quad \Lambda_n = \max_i \{\norm{\rho_n(s_i)}\}\]
We say that $\rho_n$ tamely degenerates if the sequence $(\Lambda_n)$ is unbounded and if there exists a constant $A$ such that
\[A\Lambda_n\leqslant\lambda_n.\]
Let us emphasize the fact that tameness does depend on a generating set. However, to avoid heaviness, we will say that a sequence of representations tamely degenerates if \emph{there exists a generating set} for which the sequence is tame.
\end{defi}

The following main result is an algebraic obstruction for existence of tamely degenerating Morse representations.

\begin{thm}[Main result]
Let $\Gamma$ be a finitely generated subgroup and let $G$ be a semi-simple Lie group with no compact factors. Assume $\rho_n$ is a sequence of uniform Morse representations $\rho_n : \Gamma \rightarrow G$ which tamely degenerates. Then $\Gamma$ splits as a free product of a free group and closed surface groups. 
\end{thm}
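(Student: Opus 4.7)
The plan is to run a Bestvina--Paulin style degeneration: rescale the metric so that $\Lambda_n$ becomes $1$, pass to an ultralimit to obtain an isometric action of $\Gamma$ on the asymptotic cone of $X$, replace the resulting building action by an invariant $\mathbb{R}$-tree action using the Morse hypothesis, and finally apply the Rips machine to that tree action.

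First I would pick basepoints $x_n\in X$ approximately minimising $x\mapsto\max_i d(\rho_n(s_i)x,x)$ and rescale the metric by $1/\Lambda_n$. The tameness hypothesis keeps every generator's rescaled translation length in the interval $[A,1]$, so no orbit collapses to a fixed point while none escapes to infinity either. Extracting an ultralimit with respect to a non-principal ultrafilter on $\mathbb{N}$ produces an isometric action of $\Gamma$ on a thick Euclidean building $B$ of the same rank as $X$ (Kleiner--Leeb), with no global fixed point and with each generator acting by a translation length in $[A,1]$. The uniformity of the Morse condition should then force this limit action to preserve an $\mathbb{R}$-tree $T\subset B$: uniform Morse means the orbit maps $\Gamma\to X$ are quasi-isometric embeddings with uniform constants whose images remain in uniform neighbourhoods of $\Delta$-regular singular flats, and these properties pass to the ultralimit, forcing the limiting orbits to lie inside a single apartment of $B$ and to combine into a $\Gamma$-invariant tree. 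Passing to a minimal invariant subtree yields a non-trivial minimal action of $\Gamma$ on an $\mathbb{R}$-tree $T$, and the same quantitative Morse bounds give, in the limit, triviality of arc stabilisers and stability in the sense of Bestvina--Feighn.

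Since uniform Morse representations yield word-hyperbolic $\Gamma$ by \cite{klp1}, the group $\Gamma$ has no $\mathbb{Z}^2$ subgroup. The Rips--Bestvina--Feighn classification of stable minimal actions on $\mathbb{R}$-trees with trivial arc stabilisers then realises $\Gamma$ as the fundamental group of a graph of groups with trivial edge groups and vertex groups that are free or closed surface groups; the ``toral'' pieces in the general theorem collapse into the free factor because hyperbolicity forbids $\mathbb{Z}^2$, and surface-with-boundary pieces cannot occur since trivial arc stabilisers prohibit a nontrivial boundary subgroup being identified in another vertex. This gives the stated splitting of $\Gamma$.

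The hardest step will be the passage from the building action to a tree action with trivial arc stabilisers and stability. Both features require a careful quantitative unpacking of uniform Morse through the ultralimit: one must verify that the Morse quasi-geodesics of $\rho_n$ converge, after rescaling, to honest geodesics inside a single apartment of $B$; that the apartments selected by different elements of $\Gamma$ fit together coherently into one $\mathbb{R}$-tree; and that no arc of $T$ is stabilised by a non-trivial limit element arising from unbounded sequences in $G$. Once those technical points are in place, the remainder is essentially a machine application of known results.
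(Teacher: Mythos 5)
Your overall skeleton (rescale by $\Lambda_n$, pass to an ultralimit action on a Euclidean building, extract an $\mathbb{R}$-tree, apply a Rips-type theorem) matches the paper, but two of your key steps do not hold as stated. First, the tree is not obtained by showing that ``the limiting orbits lie inside a single apartment of $B$'': the Morse condition only places each individual quasi-geodesic segment of an orbit near a diamond in \emph{some} parallel set, and different segments select different parallel sets; for a Schottky subgroup, say, the limit orbit is a genuinely branching tree that cannot sit inside one flat. The paper's mechanism is different and you are missing it: since Morse subgroups are word-hyperbolic, the asymptotic cone of $(\Gamma,\delta)$ for a word metric is itself an $\mathbb{R}$-tree $T$ (Gromov), and the ultralimit $f_\omega:T\to\chi$ of the orbit maps is a $\Gamma$-equivariant bi-Lipschitz embedding because the orbit maps are quasi-isometric embeddings with uniform constants. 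The invariant tree comes from the cone of the group itself, not from an apartment of the building.

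Second, your endgame through the Rips--Bestvina--Feighn machine for stable actions with trivial arc stabilizers is both harder than necessary and insufficient for the stated conclusion. With only trivial arc stabilizers, the vertex groups of the resulting splitting include arbitrary \emph{point} stabilizers, so you cannot conclude that every vertex group is free or a closed surface group; moreover you defer the proof of stability entirely, calling it the hardest step. The paper sidesteps all of this by proving that the limit action is \emph{free}: tameness gives $d_\infty(x,s_i x)\geqslant A>0$ for each generator, and for an arbitrary nontrivial $g\in\Gamma$ the $(L,A)$ quasi-isometric orbit map gives $d(g^k x,x)\geqslant L^{-1}\delta(g^k,\mathrm{id})-A>0$ for $k$ large, so no nontrivial element fixes a point. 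One then applies Rips' theorem for free actions on (possibly non-discrete) trees directly, and hyperbolicity of $\Gamma$ reduces the free abelian factors to $\mathbb{Z}$. You already have the inputs for this freeness argument (you note that each generator has rescaled translation length at least $A$), but you do not carry it out, and without it your application of the Rips machine does not yield the theorem.
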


The word "uniform" refers to the parameters needed to define a Morse representation (see paragraph \ref{subsection:morse} and remark \ref{metrics} below). Even though we are not able to describe a frank dividing line between uniformly and non uniformly Morse degenerations, we know that examples of uniformly Morse degenerations do exist, for instance for Schottky groups.

The rough conclusion of this statement is that degenerating Morse groups have a structure which looks like Fuchsian groups. This corroborates the general phenomenon of groups enjoying rank one behaviour, in the context of representations.\\

The hypothesis of strong degeneracy (that is $\inf_i \{\norm{\rho_n(s_i)}\}$ must be unbounded) could seem a little strong. However such degenerations are not that exotic. Indeed, consider the situation of a closed surface group $\Gamma$ and a sequence of reprensentations of $\Gamma$ which goes to infinity in the Teichmüller space. It is not hard to show that there exist generators for $\Gamma$ such that all the translation lengths go to infinity. To do that, we need to use Thurston's description of the boundary of Teichmuller space, by geodesic laminations. Given such a boundary lamination, take generators of $\Gamma$ which all intersect the lamination. Their translation lengths must diverge. Building upon those ideas we can also show that we can find generators which diverge at the same rate, hence producing a tame degeneration.

Nevertheless, to the knowledge of the author, we cannot make such a choice for degenerations of Morse subgroups : there is no analogous of Thurston's boundary for higher Teichmüller spaces.

Before going any further, let's make a couple of remarks to illusrate the relevance of this statement.

\begin{rmq}
\emph{
\begin{enumerate}
\item Consider a uniform lattice $\Gamma$ in a higher rank semi-simple Lie group $G$ (the representation $\Gamma\rightarrow G$ is then not at all flexible by Mostow's rigidity theorem). Then $\Gamma$ cannot be a free product of surface groups and a free group. Indeed those groups are Gromov-hyperbolic and they cannot be uniform lattices of higher ranks Lie groups.
\item One of the ingredients of the definition of Morse action is that the orbit map
\[\Gamma\rightarrow \Gamma\cdot x\]
is a quasi-isometry. This is also true for a cocompact lattice by Milnor-Schwarz lemma. This shows that the full hypothesis of Morse action is necessary (see however remark \ref{hyperbolic}).
\end{enumerate}
}
\end{rmq}

\paragraph*{Outline of the proof and plan of the paper.}

Here are two heuristical facts that will help us to build a solid plan. Let us make the same hypothesis than in the main theorem.\\

\textbf{Fact 1:} Going at infinity in the space of representations amounts to consider an action of $\Gamma$ on a Euclidean $\mathbb{R}$-building which is a singular CAT$(0)$-space having some combinatorial similarities with the symmetric space associated to $G$. This space is called the asymptotic cone of the symmetric space. This is a direct reference to the construction of F. Paulin and A. Parreau in \cite{paulindegenerescence} and \cite{parreau} (for a surface group this is also known as the Bestvina-Paulin compactification of the Teichmuller space, which was described in an unpublished part of Paulin's dissertation and in \cite{bestvina}). See paragraph \ref{geometry}.

\textbf{Fact 2:} When one wants to study the structure of some group, it is more eloquent to let it act on such a building than on a Riemannian manifold. An evidence for that is the Rips' theorem (\cite{gaboriaulevittpaulin}) which will be decisively used at the end of the proof. Rips' theorem is stated as theorem \ref{rips}.\\

Keeping those two guiding facts in mind, we proceed as follows:\\

\textbf{Step 1:} Let $(X,g)$ be the Riemannian symmetric space associated to $G$. Recall we denoted by $
\Lambda_n$ the number
\[\Lambda_n = \max_i \{\norm{\rho_n(s_i)}\}.\]
We can assume that $\Lambda_n\to \infty$. We consider the sequence of symmetric spaces $(X,g_n)$ where
\[g_n=\frac{g}{\sqrt{\Lambda_n}}.\]
This sequence converges for the equivariant Gromov-Hausdorff topology (\cite{paulindegenerescence}) to an Euclidean $\mathbb{R}$-building $\chi$ naturally endowed with an action of $\Gamma$. The choices of some basepoints and a principal ultrafilter is harmless.

\textbf{Step 2:} Define a notion of Morse action on a building.

\textbf{Step 3:} Show that the limit action of $\Gamma$ on $\chi$ is Morse.

\textbf{Step 4:} Show that the limit action is free.

\textbf{Step 5:} Generalize Rip's theorem for a free Morse action on a $\mathbb{R}$-building.\\

Steps 1 and 2 come from classical facts about asymptotic cones and Morse actions and will be proved in the course of section \ref{prel}. The 3 last steps are more specific and will be proved in section \ref{proof}.

\paragraph*{Aknowledgments.} I would like to heartily thank Mahan MJ for suggesting me the problem and for explaining me the usefullness of the asymptotic cone. My interest for the subject and the understanding of the paper \cite{klp1} owes a lot to discussions we had with Yash Lodha.

I also thank Mahan MJ for informing me that M. Kapovich has independently worked out a similar proof (but never written up).

\section{Preliminaries}\label{prel}

\subsection{Symmetric spaces of noncompact type, buildings and asymptotic cones of symmetric spaces}\label{geometry}

The literature about symmetric spaces and buildings is quite abundant. Complete and solid references are given in the books \cite{eberlein} for symmetric spaces and \cite{buildingsbook} for buildings. Regarding symmetric spaces, we also refer to paragraph 2.3 of \cite{klp1} for an efficient overview. For buildings, the spirit of this article is closer to the geometric definition given in chapters 3 and 4 of \cite{kleinerleebihes}. Indeed the buildings we will consider are $\mathbb{R}$-buidings which are in general nor discrete, nor Bruhat-Tits buildings associated to some algebraic group.

Let us just recall the construction of the asymptotic cone given in \cite{kleinerleebihes} and replaced in the context of degenerations of discrete groups in \cite{paulindegenerescence} and \cite{parreau}. Indeed, let $(X,d)$ be a metric space. We will apply this construction in only two cases: a symmetric space with Riemannian distance $d$ and a group $\Gamma$ with a word metric. Let also $(\Lambda_n)_{n\in\mathbb{N}}$ be a sequence of positive real numbers such that
\[\lim_{n\to\infty}\Lambda_n=+\infty.\]
According to this real numbers, we consider the scaled metric spaces $(X_n,d_n)$ where, for any $n$,
\[X_n= X \quad \mbox{ and } \quad d_n=\frac{g}{\Lambda_n}\]
We also fix a nonprincipal ultrafilter $\omega$ and a sequence of basepoint $\star_n\in X_n$. Now we construct the space
\[\chi_\infty=\set{x=(x_n)_n\in \prod_{i=1}^\infty X_n \mbox{ such that } \lim_\omega d_n(x_n,\star_n)<\infty}.\]
It is endowed with a pseudo-metric
\[d_\infty\left((x_n),(y_n)\right)=\lim_\omega d_n(x_n,y_n)\]
which is not a metric in general. Hence the space we will merely consider is defined as the quotient of $\chi_\infty$ by identifying two points at $d_\infty$-distance $0$. It is a (pointed) metric space denoted by $(\chi,d_\infty,\crochet{(\star_n)})$ where $\crochet{(x_n)}$ denotes the class in $\chi$ of an element $(x_n)\in \chi_\infty$.

Returning to the context of degenerating representations of some discrete group $\Gamma$ in $G$, with associated symmetric space $X=G/K$, the usefullness of the asymptotic cone is explained by the three statements below.

\begin{lem}[Lemma 2.4.3 in \cite{kleinerleebihes} adapted to the contex of representations in \cite{paulindegenerescence}]\label{lem:convergence}
The pointed metric spaces $(X_n,d_n,\star_n)$ converges to $(\chi,d_\infty,\crochet{(\star_n)})$ for the equivariant Gromov-Hausdorff topology.
\end{lem}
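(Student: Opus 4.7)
The strategy is the classical one for ultralimits: construct explicit $\epsilon$-almost-isometric correspondences between balls $B_R(\star_n) \subset (X_n, d_n)$ and $B_R([(\star_m)]) \subset (\chi, d_\infty)$, valid on an $\omega$-large set of indices $n$. Equivariant pointed Gromov--Hausdorff convergence in the sense of Paulin means precisely that for any $R, \epsilon > 0$ and any finite $F \subset \Gamma$, one has such an $\epsilon$-correspondence that additionally intertwines the actions of elements of $F$ up to error $\epsilon$, on an $\omega$-large set of $n$.

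The basic construction relies on the \emph{evaluation map} $\phi_n : \chi_\infty \to X_n$ defined by $\phi_n((x_m)_m) = x_n$. First, for any finitely many points $p^{(1)} = [(p^{(1)}_m)], \ldots, p^{(K)} = [(p^{(K)}_m)]$ of $\chi$, the very definition of $d_\infty$ as an $\omega$-limit yields $|d_n(p^{(k)}_n, p^{(l)}_n) - d_\infty(p^{(k)}, p^{(l)})| < \epsilon$ and $|d_n(p^{(k)}_n, \star_n) - d_\infty(p^{(k)}, [(\star_m)])| < \epsilon$ on an $\omega$-full set of indices, the intersection of finitely many such sets still being in the ultrafilter. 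Second, every point $x \in B_R(\star_n)$ is realized as $\phi_n((y_m))$ by taking $y_n = x$ and $y_m = \star_m$ for $m \neq n$: this sequence lies in $\chi_\infty$ since $\omega$ is non-principal and the distances to the basepoint vanish on a cofinite, hence $\omega$-full, set. Together these observations allow us to define a relation $\mathcal{R}_n \subset B_R(\star_n) \times B_R([(\star_m)])$ by $(x, [(y_m)]) \in \mathcal{R}_n \iff y_n = x$, yielding a correspondence of distortion $< \epsilon$ for $\omega$-almost every $n$ on any prescribed finite test configuration.

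Equivariance is then automatic. Setting $\rho_\infty(\gamma) \cdot [(x_m)] := [(\rho_m(\gamma) \cdot x_m)]$ defines an isometric $\Gamma$-action on $\chi$, because each $\rho_m(\gamma)$ is an isometry of $(X_m, d_m)$ (isometries of $(X,g)$ remain isometries after rescaling). The evaluation map strictly conjugates $\rho_\infty(\gamma)$ with $\rho_n(\gamma)$, so the correspondence $\mathcal{R}_n$ is exactly equivariant once we enlarge the finite test configuration to be stable under $F$.

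The subtle obstacle is that balls in $\chi$ are in general not totally bounded—asymptotic cones of higher rank symmetric spaces typically have non-separable balls—so one cannot fix a finite $\epsilon$-net in $B_R([(\star_m)])$ independent of $n$ and recover a uniform metric Gromov--Hausdorff convergence. The ultralimit formulation sidesteps this by requiring only that, for each finite sub-configuration of $\chi$, the approximation holds $\omega$-almost surely; the fact that the intersection of finitely many $\omega$-large sets remains $\omega$-large is exactly what the equivariant Gromov--Hausdorff topology used by Paulin demands.
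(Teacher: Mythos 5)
The paper gives no proof of this lemma---it is quoted from Kleiner--Leeb and Paulin---so your argument can only be measured against the standard one, and in its essentials it is that argument: evaluate representative sequences of a finite configuration in $\chi$ at index $n$, use the definition of $d_\infty$ as an $\omega$-limit together with the fact that a finite intersection of $\omega$-large sets is $\omega$-large, and note that evaluation intertwines $\rho_\infty(\gamma)$ with $\rho_n(\gamma)$ exactly, which yields the equivariant $\epsilon$-approximations of every finite test configuration that Paulin's topology requires. Your closing remark also correctly identifies why only this finite-configuration formulation can hold: balls in the asymptotic cone are not totally bounded.

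One step in your write-up is vacuous, though harmless. The sequence $(y_m)$ with $y_n=x$ and $y_m=\star_m$ for $m\neq n$ agrees with the basepoint sequence on a cofinite, hence $\omega$-full, set, so its class in $\chi$ is simply the basepoint $[(\star_m)]$; the relation $\mathcal{R}_n$ as you define it therefore relates every point of $B_R(\star_n)$ to the basepoint and does not produce a surjection onto (a net of) $B_R([(\star_m)])$. Since Paulin's equivariant Gromov--Hausdorff topology only asks that finite subsets of the limit space be approximated inside the $X_n$---which your first observation already provides---nothing is lost by deleting that step. You should also say explicitly that the correspondence is built from \emph{chosen representatives} of the finitely many test points (the $n$-th term of a point of $\chi$ is not well defined on the quotient), but for a finite configuration this is immaterial.
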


\begin{thm}[Theorem 5.2.1 in \cite{kleinerleebihes}]\label{thm:structurelimitbuilding}
The metric space $(\chi,d_\infty)$ has a structure of a Euclidean $\mathbb{R}$-building of the same type as $X$ (that is the Weyl group of $\chi$ is a group of affine isometries whose vectorial parts lie in the Weyl group of $X$).
\end{thm}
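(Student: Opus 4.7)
The plan is to verify the axioms of a Euclidean $\mathbb{R}$-building for $(\chi, d_\infty)$ in the sense of Chapter 4 of \cite{kleinerleebihes}: the existence of a model flat $\mathbb{A} \cong \mathbb{R}^r$, where $r$ is the rank of $X$, together with the finite Weyl group $W$ of $X$ acting by reflections on $\mathbb{A}$; a family of apartments in $\chi$ isometric to $\mathbb{A}$; the property that any two points of $\chi$ are contained in a common apartment; and the compatibility axiom that any two apartments are identified by an isometry fixing their intersection pointwise. The conclusion ``of the same type as $X$'' will then follow because the apartments are modeled on $\mathbb{A}$ with vectorial Weyl group exactly $W$, by construction.

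First I would establish that $\chi$ is a complete, geodesic CAT(0) space. Each rescaled space $(X, g_n)$ is still a Riemannian symmetric space of nonpositive curvature, and ultralimits of uniformly CAT(0) spaces are CAT(0) (Reshetnyak), so $\chi$ inherits the CAT(0) property. The candidate apartments are obtained as follows: for every $\omega$-convergent sequence of maximal flats $F_n \subset (X, g_n)$, the ultralimit $A = \lim_\omega F_n$ sits in $\chi$ as an isometric copy of $\mathbb{R}^r$, because each $F_n$ is a Euclidean flat of dimension $r$ and this flatness is preserved under rescaling and ultralimits. The affine Weyl group of $X$, which acts on each $F_n$ as a group of affine isometries with linear part in $W$, converges in the ultralimit to an affine action of the same type on $A$, thereby endowing the apartment with the right combinatorial model.

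Next I would verify that any two points of $\chi$ lie in a common apartment. Given $x = [(x_n)]$ and $y = [(y_n)]$ in $\chi$, use the classical fact that any two points in $X$ lie in a common maximal flat to choose $F_n \ni x_n, y_n$; the ultralimit of the $F_n$ is then an apartment containing both $x$ and $y$. The main obstacle is the compatibility axiom between apartments: for two apartments $A, A'$ coming from sequences $F_n, F'_n$, one must show that $A \cap A'$ is closed convex and that there is an isometry $A \to A'$ of model type fixing the intersection. In $X$ the intersections $F_n \cap F'_n$ may be combinatorially wild, but at the limit scale only their coarse geometry survives; the homogeneity of $X$ supplies elements of $G$ conjugating $F_n$ to $F'_n$ whose actions asymptotically preserve the Weyl sector structure. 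Controlling these elements through the ultralimit, and simultaneously showing that transverse directions do not collapse so that $A$ genuinely has full dimension $r$, is the technical heart of Theorem 5.2.1 of \cite{kleinerleebihes} and what I expect to be the most delicate point of the argument.
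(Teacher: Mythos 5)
The paper does not actually prove this statement: it is imported as a black box (Theorem 5.2.1 of Kleiner--Leeb), so there is no internal proof to compare yours against. Judged on its own, your proposal is a sensible top-level outline of how the external proof goes --- CAT(0) passes to ultralimits because the comparison inequality is a closed condition on quadruples of points; apartments arise as ultralimits of maximal flats, which remain isometric copies of $\mathbb{R}^r$ since rescaling a Euclidean metric gives a Euclidean metric; any two points lie in a common apartment because any geodesic of $X$ extends to a complete geodesic contained in a maximal flat. But there is a genuine gap precisely where the content of the theorem lies. You explicitly defer the compatibility between apartments as ``the technical heart,'' so what you have is an outline rather than a proof. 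More importantly, you are verifying the wrong axioms: Kleiner--Leeb's notion of Euclidean $\mathbb{R}$-building is not the classical simplicial apartment-system definition but a metric one, whose key requirements are (i) a well-defined $\sigma_{\mathrm{mod}}$-valued type map on directions in the limit space (this passes to the ultralimit because the model chamber is compact), and (ii) \emph{angle rigidity}: the angle between two germs of segments at a point must lie in the finite set of angles realized by the finite Weyl group $W$. Establishing (ii) for $\chi$ --- via semicontinuity of angles under ultralimits combined with the discreteness of the admissible angle set --- is the actual crux: it is what forces the spaces of directions of $\chi$ to be spherical buildings and what makes the family of limit flats a coherent apartment system. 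Your proposal never mentions it, and without it the compatibility axiom you postpone cannot be addressed.

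Two smaller inaccuracies. The concern that ``transverse directions do not collapse so that $A$ genuinely has full dimension $r$'' is a non-issue: the ultralimit of pointed isometric copies of $\mathbb{R}^r$ is again an isometric copy of $\mathbb{R}^r$, so no collapse can occur there; the delicate point is elsewhere, as above. And the Weyl structure of $\chi$ is not obtained by ``converging'' the affine action on each $F_n$ apartment-by-apartment; it is encoded globally by the type map and angle rigidity, and, as the paper itself warns immediately after the statement, the resulting affine Weyl group of $\chi$ is in general non-discrete --- only its vectorial part is pinned down to be $W$.
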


Beware that $\chi$ is a $\mathbb{R}$-buiding in general which means that its Weyl group may not be discrete.

\begin{lem}\label{action}
We have an action of $\Gamma$ on $\chi$ given by
\[\fonction{\rho}{\Gamma\times\chi}{\chi}{(\gamma,\crochet{(x_n)})}{\crochet{(\gamma x_n)}}.\]
\end{lem}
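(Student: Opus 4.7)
The plan is to verify in turn that the formula $\rho(\gamma,\crochet{(x_n)}) := \crochet{(\rho_n(\gamma) x_n)}$ produces a well-defined point of $\chi_\infty$, descends to the quotient $\chi$, and defines a group action. (I read the abuse of notation ``$\gamma x_n$'' as meaning $\rho_n(\gamma)\cdot x_n$, since the action of $\Gamma$ on $X_n = X$ is the one coming from the $n$-th representation.)

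The only non-formal step, which I expect to be the main obstacle, is to show that $(\rho_n(\gamma) x_n)_n$ lies in $\chi_\infty$, i.e.\ that $\lim_\omega d_n(\rho_n(\gamma) x_n,\star_n) < \infty$. Since each $\rho_n(\gamma)$ is a Riemannian isometry of $(X,g)$, it remains an isometry of every rescaled space $(X,d_n)$, and the triangle inequality yields
\[
d_n(\rho_n(\gamma) x_n,\star_n) \leq d_n(x_n,\star_n) + d_n(\rho_n(\gamma)\star_n,\star_n).
\]
The first term has finite $\omega$-limit by definition of $\chi_\infty$. For the second, writing $\gamma$ as a word in the generators and iterating the triangle inequality, it suffices to bound $\lim_\omega d_n(\rho_n(s_i)\star_n,\star_n)$ for each generator $s_i$. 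This is precisely the content of the ``harmless'' basepoint choice alluded to in Step 1 of the outline: one picks $\star_n$ to be a near-minimizer of the maximum generator displacement $x \mapsto \max_i d(\rho_n(s_i)x,x)$, so that under tame degeneration this maximum grows like $\Lambda_n$ and rescaling by $1/\Lambda_n$ keeps the sequence bounded ultrafilter-wise.

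The remaining verifications are formal consequences of the isometry property of $\rho_n(\gamma)$ on $(X,d_n)$ and of the homomorphism relation $\rho_n(\gamma_1 \gamma_2) = \rho_n(\gamma_1)\rho_n(\gamma_2)$. If $\lim_\omega d_n(x_n,y_n) = 0$, then $\lim_\omega d_n(\rho_n(\gamma) x_n,\rho_n(\gamma) y_n) = 0$, so the assignment is independent of the chosen representative and descends to $\chi$; the same computation shows moreover that $\Gamma$ acts by $d_\infty$-isometries. Associativity and the neutrality of $e\in\Gamma$ then pass to the quotient term by term, completing the action axioms.

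In short, the content of the lemma is entirely concentrated in the single estimate $\lim_\omega d_n(\rho_n(s_i)\star_n,\star_n) < \infty$; every other ingredient---well-definedness on equivalence classes, homomorphism property, isometric action---is a direct transfer of the corresponding statement on $(X,g)$ through the ultralimit.
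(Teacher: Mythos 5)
Your proof is correct and follows essentially the same route as the paper's one-line argument: $\Gamma$ acts by isometries on each $(X_n,d_n)$, hence on $\chi_\infty$, and the action descends to the quotient. The one point you rightly make explicit---that $\lim_\omega d_n(\rho_n(s_i)\star_n,\star_n)<\infty$ is needed for the action to preserve $\chi_\infty$---is exactly what the paper buries in its ``the choice of basepoints is harmless'' remark (and it also follows directly from the uniform quasi-isometry constants being computed for the rescaled metrics, cf.\ remark~\ref{metrics}).
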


\begin{proof}
The group $\Gamma$ acts on each $(X_n,d_n)$ by isometries, hence acts on $\chi_\infty$ by
\[\gamma\cdot (x_n) = (\gamma x_n)\]
and by isometries. So it goes down to $\chi$.
\end{proof}

\subsection{Morse actions}\label{subsection:morse}

This part is made with selected pieces of \cite{klp1}. Before giving the definition of Morse groups and actions, we need to describe some combinatorial features of symmetric spaces and buildings (still assuming that the reader is familiar with the basics of the theory). For the rest of this section the metric space $(X,d)$ could be either a symmetric space or an Euclidean $\mathbb{R}$-building. We endow the boundary at infinity $\partial_\infty X$ with the Tits metric (section 3.4 in \cite{eberlein}) and denote it by $\partial_\mathrm{Tits}X$. It has a structure of a spherical building (3.6 in \cite{eberlein}).

We denote by $\sigma_{\mathrm{mod}}$ the model Weyl chamber at infinity, that is
\[\sigma_{\mathrm{mod}}=\partial_\infty X /G\]
and the projection $\theta:\partial_\infty X \rightarrow \partial_\infty X /G$ which is called the type map. Tha map $\theta$ restricts to an isometry on every simplex of $\partial_\mathrm{Tits}X$. This allows to also define the type of a simplex. We use the notation $\theta(\tau)=\tau_{\mathrm{mod}}$. The type of a simplex is then a face of $\sigma_{\mathrm{mod}}$.

Throughought this text, we fix an additional data $\Theta$ which is a compact set of $\sigma_{\mathrm{mod}}$ and which encodes the type of regularity of vectors and boundary points. Morse actions will be defined with respect to this choice.

\paragraph*{Stars and regularity.}

Let $\tau\subset \partial_\mathrm{Tits}X$ be a simplex. The star of $\tau$, denotes st$(\tau)$ is the union of all simplices intersecting int$(\tau)$ non trivially. The open star of $\tau$, denoted ost$(\tau)$ consists in all open simplices whose closure intersects int$(\tau)=\overset{\circ}{\tau}$ non trivially.

Those notions of stars allow to generalize the definition of regularity for a point $\xi\in\partial_\infty X$ (hence also for a vector $v\in TX$ or a geodesic inside $X$) in \cite{eberlein} 2.11. Precisely, a point $\xi\in\partial_\infty X$ such that $\theta(\xi)\in$ ost$(\tau_{\mathrm{mod}})$ is called $\tau_{\mathrm{mod}}$-regular. If $\Theta$ is a compact subset of $\sigma_{\mathrm{mod}}$ and $\xi\in\Theta$, we say that $\xi$ is $\Theta$-regular.

Note that, if $\tau$ is of type $\sigma_{\mathrm{mod}}$, then st$(\tau)=\tau$ and ost$(\tau)=\overset{\circ}{\tau}$. But the star is not finite as soon as $\overset{\circ}{\tau}$ contains a singular point (for instance, the endpoint of a geodesic $\gamma(t)=(o,c(t))$ in $\mathbb{H}^2\times \mathbb{H}^2$ where $c$ is a non constant geodesic of the hyperbolic plane is contained in many chambers).

A segment in $X$ is called $\tau_{\mathrm{mod}}$ or $\Theta$-regular if it is contained in a geodesic $x\xi$ such that $\theta(\xi)\in$ ost$(\tau_{\mathrm{mod}})$ or $\theta(\xi)\in\Theta$ respectively.

\paragraph*{Cones.}

Let $x$ be a point in $X$ and $\tau$ be a simplex of $\partial_\mathrm{Tits}X$. The Weyl sector, denoted V$(x,\tau)$ is the union of all geodesic rays $x\xi$ where $\xi\in\tau$. It is contained in a flat (or an appartment). More generally, if $A$ is closed subset of $\partial_\infty X$, the Weyl cone, denoted V$(x,A)$ is the union of all geodesic rays $x\xi$ such that $\xi\in A$.

\paragraph*{Parallel sets.}

Let $s\subset \partial_\mathrm{Tits}X$ be an isometrically embedded unit sphere. The parallel set $P(s)$ is the union of all maximal flats or appartments asymptotic to $s$. For a pair of antipodal simplices $\tau_-$ and $\tau_+$., there is a unique minimal sphere $s(\tau_-,\tau_+)$ containing both of them and we denote by $P(\tau_-,\tau_+)$ the parallel set $P(s(\tau_-,\tau_+))$.

\paragraph*{Diamonds.}

We recall that we have fixed a compact subset $\Theta$ of $\sigma_{\mathrm{mod}}$. We first define the $\Theta$-star of a simplex $\tau \subset \partial_\mathrm{Tits}X$ as
\[\mathrm{st}_{\Theta}(\tau)= \mathrm{st}(\tau)\cap \theta^{-1}(\Theta)\]
(we construct the star only with simplices of type in $\Theta$). Let $xy$ be a geodesic segment. We define the simplex $\tau(xy)$ as follows: forward extend the geodesic segment to a whole ray $x\xi$, say of type $\tau_{\mathrm{mod}}$-regular. Then $\tau$ is the unique face of type $\tau_{\mathrm{mod}}$ such that $\xi\in$ st$(\tau)$. Finally if $x_-x_+$ is a geodesic which is $\Theta$-regular, we define the $\Theta$-diamond of $x_-x_+$ as
\[
\Diamond_\Theta(x_-x_+) = V(x_-,\mathrm{st}_\Theta(\tau_+))\cap V(x_+,\mathrm{st}_\Theta(\tau_-))
\]
where $\tau_+=\tau(x_-x_+)$ and $\tau_-=\tau(x_+x_-)$. The diamond is a subset of $P(\tau_-,\tau_+)$ (it is indeed shaped like a diamond, being the intersection of two cones going in opposite directions).

\paragraph*{Morse quasi-geodesics.}

We now fix three positive real constants $L$, $A$, $D$. We define a $(L,A,\Theta,D)$-Morse quasi-geodesic as a map $p:I\rightarrow X$ where $I$ is an interval of $\mathbb{R}$ such that $p$ is a $(L,A)$ quasi-geodesic and for all $t_1$, $t_2$ in $I$, the subpath is uniformly $D$-close to a $\Theta$-diamond $\Diamond_\Theta(x_1x_2)$ with $d(x_i,p(t_i))\leq D$ for $i=1,2$.

Such Morse quasi-geodesics appear as quasi-geodesics that satisfy a higher rank version of the Morse lemma (theorem 7.2 in \cite{klp1}).

\paragraph*{Morse actions.}

A $\Theta$-Morse embedding from a quasi-geodesic $Z$ to $X$ is a map $f:Z\rightarrow X$ which sends uniform quasi-geodesics to uniform Morse quasi-geodesics with respect to a diamond of type $\Theta$ (uniform refers to the constant $D$). Let now be $\Gamma$ a discrete group acting by isometries on $X$. We say that the action is Morse if the orbit maps $\Gamma\rightarrow \Gamma\cdot x$ is a Morse quasi-geodesic embedding for a (hence any) word distance on $\Gamma$.

\begin{rmq}\label{hyperbolic}
\emph{
\begin{enumerate}
\item On of the great achievment in \cite{klp2} is to show that a subgroup of $G$ is a Morse subgroup if and only if it is asymptotically embedded (see definition 1.5). In particular a Morse subgroup is a  word-hyperbolic group (corollary 6.17).
\item Even though we stated the main theorem with a Morse condition, the conclusion actually still holds if we only assume that the group is hyperbolic and sends quasi-geodesics to quasi-geodesics, which is a slightly weaker hypothesis.
\end{enumerate}}
\end{rmq}

\section{Proof of the main theorem}\label{proof}

Notations are the same as in section \ref{geometry}. We now come to the proof of the main result

\begin{theo} Let $\rho_n :\Gamma\rightarrow G$ be a tamely unbounded sequence of $(L,A,\Theta,D)$-Morse representations of a finitely generated torsion free subgroup. Then $\Gamma$ splits as a free product of closed surface groups and a free group.
\end{theo}

Fom now on, we assume that the scaling parameters used to construct the asymptotic cone are the numbers $\Lambda_n=\max_i\set{\norm{\rho_n(s_i)}}$ where $(s_i)$ is a generating set for $\Gamma$.

\begin{rmq}[About metrics]\label{metrics}
\emph{
The quasi-isometry (uniform) constants $(L,A)$ are computed with a word metric in $\Gamma$ on one side and for rescaled metrics $\frac{d}{\Lambda_n}$ on the other side as in paragraph \ref{geometry}.}
\end{rmq}

\begin{prop}\label{prop:morselimit}
Let $\rho_n :\Gamma\rightarrow X$ be a degenerating sequence of $(L,A,\Theta,D)$-Morse representations. Then the limit action (see lemma \ref{action}) on the asymptotic cone is $(L,A,\Theta,D)$-Morse.
\end{prop}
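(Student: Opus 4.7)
The plan is to verify the two clauses of the Morse condition, the quasi-isometric embedding property and the diamond condition, by passing the Morse data attached to each $\rho_n$ through the ultralimit. Set $o = [(\star_n)] \in \chi$. For the QI property, uniform Morseness gives an orbit map $\Gamma \to (X, d_n)$, $\gamma \mapsto \rho_n(\gamma) \star_n$, that is $(L,A)$-QI with constants independent of $n$; since $d_\infty(\gamma_1 \cdot o, \gamma_2 \cdot o) = \lim_\omega d_n(\rho_n(\gamma_1) \star_n, \rho_n(\gamma_2) \star_n)$, the two-sided inequality $\frac{1}{L} d_\Gamma(\gamma_1,\gamma_2) - A \leq d_n(\rho_n(\gamma_1)\star_n,\rho_n(\gamma_2)\star_n) \leq L d_\Gamma(\gamma_1,\gamma_2) + A$ survives the ultralimit verbatim, yielding an $(L,A)$-QI embedding $\Gamma \to \chi$.

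For the diamond condition, fix $\gamma_1, \gamma_2 \in \Gamma$ and a geodesic word $\gamma_1 = w_0, w_1, \ldots, w_k = \gamma_2$. For each $n$, Morseness of $\rho_n$ furnishes a $\Theta$-diamond $\Diamond_\Theta(x_1^n, x_2^n) \subset (X, d_n)$ with $d_n(x_i^n, \rho_n(\gamma_i) \star_n) \leq D$, together with, for each $j$, a point $z_j^n$ in this diamond satisfying $d_n(z_j^n, \rho_n(w_j) \star_n) \leq D$. Setting $x_i = [(x_i^n)]$ and $z_j = [(z_j^n)]$ in $\chi$, the ultralimit yields $d_\infty(x_i, \gamma_i \cdot o) \leq D$ and $d_\infty(z_j, w_j \cdot o) \leq D$. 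The task then reduces to showing that each $z_j$ lies in the $\Theta$-diamond $\Diamond_\Theta(x_1, x_2) \subset \chi$ built from the Kleiner--Leeb building structure on $\chi$ provided by Theorem~\ref{thm:structurelimitbuilding}.

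This identification rests on three compatibility statements between the symmetric-space structures on $(X, d_n)$ and the building structure on $\chi$: (i) the pair of antipodal simplices $\tau_\pm^n \subset \partial_\mathrm{Tits}(X,d_n)$ determining the $n$-th diamond admits an ultralimit pair $\tau_\pm \subset \partial_\mathrm{Tits}\chi$ of type in $\Theta$, using compactness of $\Theta$ and the fact that the type map behaves continuously under ultralimits; (ii) the ultralimit of Weyl cones $V(x_i^n, \mathrm{st}_\Theta(\tau_\mp^n))$ is the Weyl cone $V(x_i, \mathrm{st}_\Theta(\tau_\mp))$ in $\chi$; and (iii) taking ultralimits commutes with the intersection defining the diamond. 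Once these are in place, $z_j \in \Diamond_\Theta(x_1, x_2)$ follows immediately and the proposition is proved.

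I expect the main obstacle to be item (ii): although Theorem~\ref{thm:structurelimitbuilding} guarantees a building structure on $\chi$ of the same type as $X$, the explicit matching of sectors, parallel sets and stars of simplices in $\chi$ with ultralimits of their symmetric-space counterparts requires unwinding the Kleiner--Leeb construction and some bookkeeping about basepoints and about the ultralimit description of $\partial_\mathrm{Tits}\chi$. Items (i) and (iii) are more routine, being essentially consequences of the compactness of $\Theta$ and the definition of the ultralimit.
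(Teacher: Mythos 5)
Your proposal takes essentially the same route as the paper: pass the $(L,A)$-inequalities and the diamond data attached to each $\rho_n$ through the ultralimit, reducing everything to the compatibility of geodesic types, $\Theta$-stars and Weyl cones with the limiting building structure. The paper treats your item (ii) exactly as you anticipate, invoking the Kleiner--Leeb construction (their Proposition 2.4.6 for the regular case) together with compactness of $\Theta$ (so that the $\Theta$-star is closed) to conclude that diamonds converge to diamonds of the same type.
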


\begin{rmq}
\emph{
Only a weaker version of this lemma will be used in the rest of the proof. Precisely, it is enough for our purpose to show that the orbit maps send quasi-geodesics to quasi-geodesics. But the limiting Morse property may be of independent interest and will be proved with the same argument.}
\end{rmq}

\begin{proof}
From lemma \ref{lem:convergence} and from the very definition of Gromov-Hausdorff, we know that $(L,A)$ quasi-geodesics converge to $(L,A)$ quasi-geodesics. We only need to show that the Morse condition is preserved at the limit. By lemma \ref{lem:convergence} again we know that sequences of diamonds in $X_n$ converge to diamonds in $\chi$ of the same type $\Theta$. Indeed, geodesics converge to geodesics of the same type (endpoints of the geodesics must converge and we conclude using that $\Theta$ is compact, so the $\Theta$-star is closed). Then Weyl cones converge to Weyl cones (see also \cite{kleinerleebihes} proposition 2.4.6 for the regular case). So let $\varepsilon$ be a positive real number and $s_{i_1},\cdots,s_{i_k}$ be a quasi-geodesic in $\Gamma$. For any sequence of basepoints $(x_n)$, we get a Morse quasi-geodesic $\gamma_n=s_{i_1}(x_n)\cdots s_{i_k}(x_n)$ in $X_n$. Hence there exists a $\Theta$-diamond $\Diamond_{\Theta,X_n}(a_n,b_n)$ for which
\[d_n(s_{i_1}(x_n),a_n)\leq D,\quad d_n(s_{i_k}(x_n),b_n)\leq D \]
and $\gamma_n$ is uniformly $D$-close (for $d_n$) to $\Diamond_{\Theta,X_n}(a_n,b_n)$.
Now, we can choose $n$ big enough so that $\Diamond_{\Theta,X_n}(a_n,b_n)$ is $\varepsilon$-close to some diamond $\Diamond_{\Theta,\chi}(a,b)$ in $\chi$ and we note that this gives a $(L,A,\Theta,D+\varepsilon)$-Morse action of $\Gamma$ on $\chi$ and $\varepsilon$ is arbitrary small.
\end{proof}

\begin{lem}\label{lem:freeaction}
Let $\Gamma = <s_1,\cdots,s_k>$ be a finitely generated group acting Morsely (via $\rho$) on a $\mathbb{R}$-building $\chi$. Suppose none of the generators have a fixed point. Then the action is free.
\end{lem}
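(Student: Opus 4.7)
The plan is to argue by contradiction: assume that some non-trivial $\gamma\in\Gamma$ has a fixed point $p\in\chi$, and derive a contradiction with the hypothesis that none of the generators has a fixed point.

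The key observation is that the quasi-isometric embedding content of the Morse hypothesis already restricts $\gamma$ severely. Since the orbit map $g\mapsto g\cdot p$ is an $(L,A)$-quasi-isometric embedding and $\gamma^n p = p$ for every $n\in\mathbb{Z}$, one obtains the uniform bound
\[
|\gamma^n|_\Gamma\;\leq\;L\cdot d_\infty(\gamma^n p,p)+A\;=\;A.
\]
Because $\Gamma$ is finitely generated, its word balls are finite, so the cyclic subgroup $\langle\gamma\rangle$ is finite and $\gamma$ has some finite order $k\ge 2$. This settles the infinite-order case at once.

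It remains to exclude torsion. Isometries of an $\mathbb{R}$-building are semi-simple, so the hypothesis that no $s_i$ has a fixed point forces each generator to be axial with positive translation length, and finiteness of the generating set yields $\alpha:=\min_i\|s_i\|>0$. Writing $\gamma=s_{j_1}\cdots s_{j_n}$, the orbit sequence $p_m:=s_{j_1}\cdots s_{j_m}\cdot p$ is a closed loop at $p$, constrained by the Morse diamond condition to lie uniformly $D$-close to the degenerate diamond $\Diamond_\Theta(p,p)=\{p\}$, while consecutive points satisfy $d_\infty(p_m,p_{m+1})\ge\alpha$. Applying the same analysis to the powers $\gamma,\gamma^2,\dots,\gamma^{k-1}$, each of which also fixes $p$, produces arbitrarily long such closed loops in a ball of radius $D$ with uniformly bounded step size, which should ultimately force a contradiction.

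The main obstacle I expect is making this last step fully rigorous: balls in an $\mathbb{R}$-building are not locally finite, so merely producing many well-separated orbit points is not an immediate contradiction. The finer Morse diamond structure (beyond plain QIE) and the specific geometry of fixed-point subcomplexes in an $\mathbb{R}$-building must be used to actually force one of the $s_i$ to fix a point, thereby contradicting the hypothesis. In the torsion-free setting of the main theorem this second step is redundant, and the QIE argument alone yields freeness of the action.
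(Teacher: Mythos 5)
Your first paragraph is exactly the paper's proof: the Morse hypothesis makes the orbit map an $(L,A)$-quasi-isometric embedding, so a fixed point of $\gamma$ traps all powers $\gamma^n$ in a word-ball of bounded radius (about $LA$), which is impossible for an element of infinite order. The paper disposes of torsion exactly as your closing remark does — by invoking the torsion-freeness of $\Gamma$ assumed in the main theorem (though omitted from the lemma's statement) — so your admittedly incomplete second step is superfluous rather than a gap relative to the paper's own argument.
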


\begin{proof}
Fronm the hypothesis on the genertors, we only keep in mind that none of the $\rho(s_i)$ is trivial.
Let $x\in \chi$ and let $g\in \Gamma$. It is enough to show that $g^k$ doesn't fix $x$ for some $k$ sufficiently big. Since the group $\Gamma$ is torsion free, the $g^k$ are all different. Hence, for some $k$ big enough, $g^k$ is not contained in the ball of radius $AL$ around the identity element for the word distance $\delta$ in $\Gamma$. We get, for such a choice of $k$,
\[d(g^kx,x)\geqslant L^{-1}\delta(g^k,id)-A>0.\]
\end{proof}

The condition of tame degeneration precisely ensures that none of the generators have a fixed point when acting on $\chi$. Indeed
\begin{eqnarray*}
d_\infty(\crochet{(x_n)},s_i\crochet{(x_n)}) & = & \lim_\omega d_n(x_n,\rho_n(s_i)x_n)\\
 & = & \lim_\omega \frac{d_n(x_n,\rho_n(s_i)x_n)}{\Lambda_n}\\
 & \geqslant & \lim_\omega \frac{\lambda_n}{\Lambda_n}\\
 & \geqslant & A
\end{eqnarray*}
(recall definition \ref{defi:tame})

\begin{lem}
Let $\Gamma = <s_1,\cdots,s_k>$ be a finitely generated group acting freely and Morsely on a $\mathbb{R}$-building $\chi$. Then $\Gamma$ splits as a free product of a free group and closed surface groups. 
\end{lem}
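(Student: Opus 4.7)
The approach has three ingredients: first, reduce the free Morse action on the $\mathbb{R}$-building $\chi$ to a free action on an $\mathbb{R}$-tree $T$; second, apply Rips' theorem to the tree action; third, use word-hyperbolicity of Morse subgroups to eliminate the non-cyclic abelian pieces that Rips' decomposition may produce.

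The first step is what I expect to be the main obstacle. The natural construction uses the Morse hypothesis to select, for each orbit pair $(g\cdot x, h\cdot x)$, a canonical $\Theta$-regular diamond containing the geodesic segment $[g\cdot x, h\cdot x]$ up to bounded error. Each such diamond lies in a parallel set $P(\tau_-,\tau_+)$, which in the $\mathbb{R}$-building setting splits as a metric product of an $\mathbb{R}$-tree factor along the $\tau_{\mathrm{mod}}$-direction and a transverse factor. Collapsing the transverse factor uniformly and gluing over the orbit should yield a $\Gamma$-equivariant map from the closed convex hull of $\Gamma\cdot x$ to an $\mathbb{R}$-tree $T$. Freeness on $T$ is inherited from freeness on $\chi$ via the Morse lower bound on orbit displacements exploited in lemma \ref{lem:freeaction}: compactness of $\Theta$ prevents any collapse of orbit distances under the projection, so no nontrivial $g\in\Gamma$ fixes a point of $T$.

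Once a free, minimal, isometric action of $\Gamma$ on the $\mathbb{R}$-tree $T$ is in hand, I apply theorem \ref{rips} (Gaboriau--Levitt--Paulin): the finitely generated group $\Gamma$ decomposes as a graph of groups with vertex groups among closed surface groups, finitely generated free abelian groups, and free groups (the Levitt exotic pieces do not arise for free actions). The Morse hypothesis then forces $\Gamma$ to be Gromov-hyperbolic by remark \ref{hyperbolic}, so $\Gamma$ contains no $\mathbb{Z}^n$ for $n\geq 2$. Consequently every free abelian vertex group in the decomposition is cyclic and can be absorbed into the free factor, leaving precisely the claimed free product of closed surface groups and a free group.

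The technical heart is the first step. One must check that the parallel-set projection is coherent across diamonds corresponding to different orbit pairs, that it respects the $\Gamma$-action, and that the image is genuinely an $\mathbb{R}$-tree (verifying the tripod/$0$-hyperbolicity condition on segments arising from concatenated diamonds). This is where the compactness of the type data $\Theta$ does its real work, beyond merely ensuring quasi-geodesic behaviour of orbits. An alternative route, suggested by the author's step 5 in the outline, is to avoid the projection altogether and generalise Rips' machine directly to free Morse actions on $\mathbb{R}$-buildings; the conclusion is the same.
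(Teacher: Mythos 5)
Your overall skeleton (produce an $\mathbb{R}$-tree with a free $\Gamma$-action, apply Rips' theorem, then use word-hyperbolicity of Morse groups to reduce the free abelian factors to $\mathbb{Z}$) is the right one, and your second and third steps coincide with the author's. But the first step, which you correctly identify as the technical heart, is where your route has a genuine gap, and it is also where the paper does something quite different and much simpler. Your proposed construction --- pick a $\Theta$-diamond for each orbit pair, project each ambient parallel set onto a ``tree factor'', collapse the transverse factor and glue --- is not actually carried out, and as described it would not work: the parallel set $P(\tau_-,\tau_+)$ of a pair of antipodal simplices splits as a product of a \emph{flat} of dimension $1+\dim\tau_{\mathrm{mod}}$ with a cross-section building, so the factor ``along the $\tau_{\mathrm{mod}}$-direction'' is a Euclidean space, not an $\mathbb{R}$-tree, unless $\tau_{\mathrm{mod}}$ is a vertex. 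Even granting a further one-dimensional projection, the diamonds attached to different orbit pairs sit in different parallel sets with no canonical identification of their flat directions, so the coherence and $\Gamma$-equivariance of the gluing, and the $0$-hyperbolicity (tripod condition) of the resulting space, are all left unverified; none of these is routine.

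The author sidesteps all of this by building the tree on the group side rather than inside the building. Since a Morse action forces $\Gamma$ to be word-hyperbolic (remark \ref{hyperbolic}), Gromov's theorem says that the asymptotic cone $T$ of $(\Gamma,\delta)$ is already an $\mathbb{R}$-tree, on which $\Gamma$ acts isometrically by left translations. The orbit maps $f_n:(\Gamma,\delta)\to(\Gamma\cdot\star_n,d_n)$ are uniform $(L,A)$-quasi-isometric embeddings, so the ultralimit $f_\omega:T\to\chi$ is a $\Gamma$-equivariant bi-Lipschitz (in particular injective) map; freeness of the action on $T$ is then pulled back directly from freeness of the action on $\chi$ established in lemma \ref{lem:freeaction}. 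Rips' theorem applies to the action on $T$, and hyperbolicity excludes $\mathbb{Z}^n$ for $n\geq 2$, exactly as in your last step. Carrying out your projection scheme would amount to re-proving the generalization of Rips' machine to buildings that the author mentions as ``step 5'' of the outline; the quasi-isometric-embedding property of Morse orbit maps is precisely what lets one avoid that.
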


The goal is to apply Rips' theorem below.

\begin{thm}[Rips' theorem, see \cite{gaboriaulevittpaulin}]\label{rips}
Let $\Gamma$ be a finitely generated group acting freely on a (possibly non discrete) tree. Then $\Gamma$ is a free product of free Abelian groups with closed surface groups.
\end{thm}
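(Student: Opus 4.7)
The plan is to follow the strategy originally due to Rips, as formalized by Bestvina--Feighn and refined by Gaboriau--Levitt--Paulin. The idea is to encode the $\Gamma$-action on the $\mathbb{R}$-tree $T$ by a compact foliated $2$-complex (a band complex), apply the ``Rips machine'' to put it in a normal form consisting of finitely many pieces of a handful of standard types, and then read off the decomposition of $\Gamma$ via Bass--Serre theory.

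First I would replace $T$ by the unique minimal $\Gamma$-invariant subtree, then pick a finite generating set $S=\{s_1,\ldots,s_k\}$ and a basepoint $x\in T$. The finite subtree $K=\bigcup_i [x,s_i x]$ together with the partial isometries induced by restricting each $s_i$ defines a finite system of isometries in the sense of Gaboriau--Levitt--Paulin, or equivalently a band complex $\Sigma$ whose leaf space is equivariantly isometric to $T$ and whose orbifold fundamental group surjects onto $\Gamma$. The freeness hypothesis translates into the fact that every arc stabilizer is trivial and the leaves of $\Sigma$ are simply connected (each leaf is a copy of a $\Gamma$-orbit).

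The core of the argument is the Rips machine: a finite procedure made of elementary moves (collapsing free subarcs, splitting and reshuffling bands, free expansions) that leaves the leaf space unchanged up to equivariant isometry. After finitely many steps the band complex decomposes as a disjoint union of minimal components of four known types: \emph{simplicial}, which contribute edges in a graph of groups with trivial edge groups (since arc stabilizers vanish); \emph{interval exchange/surface}, whose fundamental group is that of a compact surface; \emph{axial/toral}, which supports an action factoring through a finitely generated free abelian group acting by translations on a line; and the exceptional \emph{thin} (Levitt) type.

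The hard part, and the whole point of the Rips--Bestvina--Feighn analysis, is to eliminate the thin type under the freeness assumption. A thin minimal component carries a pseudogroup of isometries whose orbit equivalence relation is not of finite type, and a careful dynamical argument shows that the associated finitely generated subgroup cannot act freely on an $\mathbb{R}$-tree. Once thin components are ruled out, the remaining pieces assemble into a graph of groups with trivial edge groups, surface vertex groups (from the interval exchange components), and free abelian vertex groups (from the axial components). Because all edge groups are trivial, the Bass--Serre fundamental group of this graph of groups is precisely the free product of the vertex groups with the free group given by the fundamental group of the underlying graph, yielding the stated decomposition of $\Gamma$.
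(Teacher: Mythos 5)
First, a remark on what you are being compared to: the paper does not prove this statement at all. It is quoted from Gaboriau--Levitt--Paulin \cite{gaboriaulevittpaulin} and used as a black box; the proof environment that follows it in the source is actually the proof of the \emph{preceding} lemma, which merely applies Rips' theorem to the limiting $\mathbb{R}$-tree. So your proposal has to be judged against the literature. Your overall strategy --- resolve the action by a finite system of isometries/band complex, run the Rips machine, classify minimal components into simplicial, surface, axial and thin types, and read off a graph of groups with trivial edge groups --- is indeed the Gaboriau--Levitt--Paulin/Bestvina--Feighn strategy.

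There is, however, a genuine error at the step you yourself single out as the hard part. You claim that thin (Levitt/exotic) components are \emph{eliminated} because ``the associated finitely generated subgroup cannot act freely on an $\mathbb{R}$-tree''. This is false: non-abelian free groups do admit free, minimal, non-simplicial actions on $\mathbb{R}$-trees of thin type --- this is exactly why such trees are often called trees of Levitt type (for instance, attracting trees of suitable non-geometric fully irreducible automorphisms of $F_n$ are of this kind). Since such actions exist, no argument can rule thin components out, and your conclusion ``once thin components are ruled out, the remaining pieces assemble\ldots'' does not follow. What the actual proof establishes is that a thin component arising from a free action (equivalently, from a system with independent generators) keeps producing free arcs under the Rips process and therefore contributes a \emph{free group} factor to the free product decomposition; this is consistent with the statement because a free group is itself a free product of copies of $\mathbb{Z}$. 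A secondary, more standard gap: the band complex you build from a finite subtree $K$ only has fundamental group \emph{surjecting} onto $\Gamma$, with dual tree mapping to $T$ rather than isometric to it; transferring the decomposition from $\pi_1(\Sigma)$ to $\Gamma$ requires the exhaustion/stabilization argument of Gaboriau--Levitt--Paulin (take larger and larger finite subtrees and show the induced decompositions stabilize), which your sketch omits.
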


\begin{proof}
We need to find a tree on which $\Gamma$ acts freely. By a result of M. Gromov in \cite{gromovasymptotictrees}, part 2.6 (see also \cite{drutuasymptotictrees}) and because $\Gamma$ is hyperbolic (remark \ref{hyperbolic}), the asymptotic cone of $(\Gamma,\delta)$ where $\delta$ is any word metric (and for any choice of basepoints and a principal ultrafilter $\omega$) is a $\mathbb{R}$-tree. We denote this metric space by $(T,\delta)$. Basepoints in $X_n$ are still denoted $\star_n$ amd the ultrafilter is the same.

Moreover, since the embedding $f_n : (\Gamma,\delta) \rightarrow (\Gamma\cdot \star_n,d_n)$ is a $(L,A)$ quasi-isometry, the map
\[\fonction{f_\omega}{T}{\chi}{\crochet{(\gamma_n)_n}}{\crochet{(f_n(\gamma_n))_n}=\crochet{(\gamma_n(x_n))_n}}\]
is a bi-lipschitz embedding (lemma 2.24 in \cite{klp1}).
We get that $\Gamma$ acts on the bilipschitz $\mathbb{R}$-tree $f_\omega(T)$ by
\[\gamma\cdot \crochet{(\gamma_n(x_n))_n} = \crochet{(\gamma\gamma_n(x_n))_n}\]
This action is free since it is the restriction of the action described in lemma \ref{action} and proved to be free in lemma \ref{lem:freeaction}.

Finally, taking into account the hyperbolicity of the group, each free Abelian factor must be either trivial or $\mathbb{Z}$.
\end{proof}

\bibliographystyle{alpha}
\bibliography{biblio}

\end{document}